\colorlet{darkblue}{blue!55!black}
\colorlet{darkcyan}{cyan!50!black}
\colorlet{darkgreen}{green!60!black}
\def\eqref#1{\textcolor{darkblue}{(\ref{#1})}}
\Crefname{equation}{Diagram}{Diagrams}
\crefname{equation}{Equation}{Equations}
\let\oldequation\equation
\let\oldendequation\endequation
\renewenvironment{equation}{\linenomathNonumbers\oldequation}{\oldendequation\endlinenomath}
\let\expandafter\oldequationstar\csname equation*\endcsname
\let\expandafter\oldendequationstar\csname endequation*\endcsname
\renewenvironment{equation*}{\linenomathNonumbers\oldequationstar}{\oldendequationstar\endlinenomath}
\let\oldalign\align
\let\oldendalign\endalign
\let\expandafter\oldalignstar\csname align*\endcsname
\let\expandafter\oldendalignstar\csname endalign*\endcsname
\renewenvironment{align*}{\linenomathNonumbers\oldalignstar}{\oldendalignstar\endlinenomath}
\theoremstyle{plain}
\newtheorem{theorem}{Theorem}[section]
\newtheorem{lemma}[theorem]{Lemma}
\newtheorem{corollary}[theorem]{Corollary}
\newtheorem{proposition}[theorem]{Proposition}
\newtheorem*{theorem*}{Theorem}
\newtheorem*{corollary*}{Corollary}
\newtheorem*{proposition*}{Proposition}
\theoremstyle{definition}
\newtheorem{definition}[theorem]{Definition}
\newtheorem{example}[theorem]{Example}
\newtheorem{remark}[theorem]{Remark}
\newtheorem*{ack}{Acknowledgments}
\newtheorem*{warning}{Warning}
\numberwithin{equation}{section}
\numberwithin{theorem}{section}
\title{A note on quasi-perfect morphisms}
\author[T.~De Deyn]{Timothy De Deyn}
\address{T.~De Deyn,
Max Planck Institute for Mathematics,
Bonn, Germany}
\email{dedeyn@mpim-bonn.mpg.de}
\author[P.~Lank]{Pat Lank}
\address{P.~Lank,
Dipartimento di Matematica “F. Enriques”, Universit\`{a} degli Studi di Milano, Via Cesare
Saldini 50, 20133 Milano, Italy}
\email{plankmathematics@gmail.com}
\author[K.~Manali Rahul]{Kabeer Manali Rahul}
\address{K.~Manali Rahul,
Max Planck Institute for Mathematics,
Bonn, Germany}
\email{kabeermr.maths@gmail.com}
\date{\today}
\keywords{Algebraic spaces, quasi-perfect morphism, generation (for triangulated categories), regularity}
\subjclass[2020]{14A30 (primary), 13D09, 14B05, 18G80}
\begin{document}
    
\begin{abstract}
    This note is concerned with quasi-perfect morphisms between Noetherian algebraic spaces. In particular, we study the local behavior of quasi-perfect proper morphisms. We show that quasi-perfectness of a proper morphism can be detected at the \'{e}tale local rings of points of the target, as well as their completions and (strict) Henselizations. As a corollary, we obtain that the locus of points where a proper morphism is quasi-perfect is Zariski open.
\end{abstract}

\maketitle


\section{Introduction}
\label{sec:intro}

Given a quasi-compact and quasi-separated morphism $f\colon Y \to X$ of schemes, Neeman’s Brown Representability Theorem provides a right adjoint $f^\times$ to the derived pushforward $\mathbf{R}f_\ast$ on $D_{\operatorname{qc}}$, the derived category of complexes with quasi-coherent cohomology. While $f^\times$ preserves small coproducts for complexes that are uniformly homologically bounded below, this property fails in general. Motivated by \cite[Lemma 1 \& Corollary 2]{Verdier:1969}, Lipman--Neeman \cite{Lipman/Neeman:2007} introduced the notion of a \emph{quasi-perfect morphism} to remedy this.

The defining property is that $f^\times$ preserves small coproducts on all of $D_{\operatorname{qc}}$ which is equivalent to $\mathbf{R}f_\ast$ preserving perfect complexes; see \cite[Theorem 1.2 \& Proposition 2.1]{Lipman/Neeman:2007}. There are numerous examples of quasi-perfect morphisms, e.g.\ any proper morphisms of finite Tor dimension between Noetherian schemes. However, quasi-perfectness generally fails; for example, for closed immersions into nonregular Noetherian schemes.

More recent developments have recast these ideas in a categorical framework. For example, Balmer--Dell'Ambrogio--Sanders develop a duality theory for rigidly-compactly generated tensor-triangulated categories \cite[Theorem 1.7]{Balmer/Dellambrogio/Sanders:2016}. However, when interpreted in the setting of schemes, this perspective is largely global in nature. In particular, the local behavior of quasi-perfectness remains less understood; for instance, how quasi-perfectness at the local rings of the target influences quasi-perfectness globally.

In this note, we study the local behavior of quasi-perfectness for proper morphisms between algebraic spaces. 

\begin{theorem*}
    [\Cref{thm:quasi-perfectness_locality}]
    Let $f\colon Y\to X$ be a proper morphism of Noetherian algebraic spaces. Suppose $A_\# \in \{\mathcal{O}_{X,\#}^h, \mathcal{O}_{X,\#}^{sh} \}$. 
    Then the following are equivalent:
    \begin{enumerate}
       \item $f$ is quasi-perfect,
       \item the base change of $f$ to $\operatorname{Spec}(A_p)$ is quasi-perfect for all $p \in |X|$.
    \end{enumerate}
    Furthermore, if $s\colon U\to X$ is an \'{e}tale surjective morphism from a quasi-compact scheme, one can take $A_\# \in \{ \mathcal{O}_{U,\#}, \widehat{\mathcal{O}}_{U,\#}\}$
    and let $p$ run over all points in $U$.
\end{theorem*}

To the best of our knowledge, the observed behavior is new even for schemes. \Cref{thm:quasi-perfectness_locality} provides a mechanism for detecting quasi-perfectness by reducing the question to targets that are local rings. 
The proof relies on several auxiliary lemmas, potentially of independent interest, that allow for quasi-perfectness to be checked using specific classes of flat covers (see \Cref{lem:faithfully_flat_detect_perfect,lem:faithfully_flat_detect_perfect}).

As a corollary, we have the following.

\begin{corollary*}
    [\Cref{cor:q-perfect_locus}]
    Let $f\colon Y\to X$ be a proper morphism of Noetherian algebraic spaces. Suppose that $A_\#$ and $s\colon U\to X$ are as in \Cref{thm:quasi-perfectness_locality}. Then the locus
    \begin{displaymath}
        S:=\left\{ p \mid \text{the projection } Y\times_X \operatorname{Spec}(A_p)\to \operatorname{Spec}(A_p) \textrm{ is quasi-perfect}\right\}
    \end{displaymath}
    is an open subset of $X$ or $U$ depending on the choice of $A_\#$. 
\end{corollary*}

One can think of the open subset $S$ as the `the quasi-perfect locus of $f$.
The fact it is always open for proper morphisms appears to be new even for schemes. As an application, we show the following.

\begin{proposition*}
    Let $X$ be a reduced Noetherian algebraic space. Suppose further that $f\colon Y \to X$ is a proper surjective morphism.
    Then the regular locus of $X$ contains a nonempty open subset (i.e.\ is $J\textrm{-}0$) whenever $Y$ has this property.
\end{proposition*}

Again, to the best of our knowledge, this is new even in the case of schemes. In fact, we prove a slightly more general statement, see \Cref{prop:descending_open_in_regular_locus}. Furthermore, in \Cref{app:quasi-perfect_detect_regularity}, we record a characterization for regularity of Noetherian algebraic spaces in terms of quasi-perfectness of blowups.

Finally, we comment on the restriction to proper morphisms in \Cref{thm:quasi-perfectness_locality}. In practice, it is a mild restriction. Indeed, for a separated morphism of finite type between Noetherian schemes, quasi-perfectness already implies properness (see \cite[Proposition 0.20]{Neeman:2021}). The properness assumption is required in our arguments to employ local-to-global techniques involving bounded coherent objects (e.g.\ \cite[Theorem 4.1]{Avramov/Iyengar/Lipman:2010}).

We expect the more general results to be useful in future work on singularities of algebraic spaces. This a topic that has recently been gaining traction. For example, recently the minimal model program has been established for excellent algebraic $\mathbb{Q}$-spaces \cite{Lyu/Murayama:2022}. 

\begin{ack}
    De Deyn was supported by ERC Consolidator Grant 101001227 (MMiMMa).
    Lank was supported under the ERC Advanced Grant 101095900-TriCatApp. The authors thank Leovigildo Alonso Tarr\'{i}o, Bhargav Bhatt, Linquan Ma, Fei Peng, and Karl Schwede for discussions and/or comments. Lank would like to dedicate this work to Boz, whose joyous and warm memories left behind will be forever remembered.
\end{ack}

\section{Preliminaries}
\label{sec:prelim}

We make some recollections in this section, mainly to keep things self-contained and to fix notation and terminology.

\subsection{Generation}
\label{sec:prelim_generation}

Let $\mathcal{T}$ be a triangulated category. 
Recall that a \textbf{strictly full} subcategory is a full subcategory closed under isomorphisms. 
A triangulated subcategory of $\mathcal{T}$ is called \textbf{thick} if it is closed under direct summands (and so automatically strict, i.e.\ closed under isomorphisms). 
For any collection of objects $S$ of $\mathcal{T}$, let $\operatorname{add}(S)$ denote the smallest strictly full subcategory of $\mathcal{T}$ containing $S$ that is closed under shifts, finite coproducts and direct summands.
Following \cite{Bondal/VandenBergh:2003}, inductively, define
\begin{displaymath}
    \langle S\rangle_n :=
    \begin{cases}
        \operatorname{add}(\varnothing) & n=0, \\
        \operatorname{add}(S) & n=1, \\
        \operatorname{add}(\{ \operatorname{cone}\phi \mid \phi \in \operatorname{Hom}(\langle \mathcal{S} \rangle_{n-1}, \langle \mathcal{S} \rangle_1) \}) & n>1.
    \end{cases}
\end{displaymath}
An important feature of this construction is that $\langle S \rangle := \cup_{n\geq 0} \langle  S \rangle_n$ is the smallest thick triangulated subcategory of $\mathcal{T}$ containing $S$. 
Observe also that it is possible to rewrite the definition of $\langle \mathcal{S} \rangle_n$ using $\operatorname{Hom}(\langle \mathcal{S} \rangle_1, \langle \mathcal{S} \rangle_{n-1})$, i.e.\ there is no choice `left vs.\ right'. 

\subsection{Algebraic Spaces}
\label{sec:prelim_spaces}

Our conventions follow that of the Stacks Project \cite{StacksProject}. 
Throughout we work with algebraic spaces in the `absolute' setting, i.e.\ over the base scheme $\operatorname{Spec}(\mathbb{Z})$. Of course, everything remains valid over an arbitrary base.

Let $X$ be an algebraic space throughout this subsection.
We denote by $|X|$ the set of points of $X$, endowed with the quotient topology via any \'{e}tale surjective morphism from a scheme; see e.g.\ \cite[\href{https://stacks.math.columbia.edu/tag/03BT}{Tag 03BT}]{StacksProject}.
The algebraic spaces that are considered in this work will always be \textbf{decent} in the sense of \cite[\href{https://stacks.math.columbia.edu/tag/03I7}{Tag 03I7}]{StacksProject}.
This condition ensures that there is a good relationship between the topology of $|X|$ and properties of $X$ itself.
Note that quasi-separated algebraic spaces are automatically decent; in fact, for locally Noetherian algebraic spaces being descent is equivalent to being quasi-separated \cite[\href{https://stacks.math.columbia.edu/tag/0BB6}{Tag 0BB6}]{StacksProject}.

Defining irreducibility for algebraic spaces is a bit subtle as this notion is not \'{e}tale local.
Therefore, recall that an algebraic space $X$ is called \textbf{irreducible} when its underlying topological spaces $|X|$ is irreducible; it is not clear whether one can characterize this using coverings from schemes (e.g.\ using \'{e}tale surjective morphisms from schemes).
An algebraic spaces is \textbf{integral} if it's irreducible, reduced, and decent \cite[\href{https://stacks.math.columbia.edu/tag/0AD3}{Tag 0AD3}]{StacksProject}. 

\subsubsection{Derived categories and functors}
\label{sec:prelim_spaces_modules_derived_categories}

Let $X_{\acute{e}t}$ denote the small \'{e}tale site of $X$. 
The assignment $U \mapsto \Gamma (U, \mathcal{O}_ U)$ defines a sheaf of rings on $X_{\acute{e}t}$, denoted $\mathcal{O}_X$ and called the structure sheaf (of $X$). 
Consequently, it makes sense to speak of sheaves $\mathcal{O}_X$-modules on $X_{\acute{e}t}$; denote the (Grothendieck) Abelian category of such sheaves by $\operatorname{Mod}(X)$. 
The Abelian subcategories of quasi-coherent (resp.\ coherent) $\mathcal{O}_X$-modules are denoted by $\operatorname{Qcoh}(X)$ (resp.\ $\operatorname{Coh}(X)$). 

Next, let $D(X):=D(\operatorname{Mod}(X))$ denote the (unbounded) derived category of $\operatorname{Mod}(X)$ and let $D_{\operatorname{qc}}(X)$ resp.\ $D_{\operatorname{coh}}(X)$ be the full subcategories consisting of complexes with quasi-coherent resp.\ coherent cohomology sheaves. 
Furthermore, let $D^{\flat}_{\#}(X):=D^{\flat}(X)\cap D_{\#}(X)$ for $\flat\in \{+,-,b,\geq n,\dots\}$ and $\#\in \{\operatorname{qc},\operatorname{coh}\}$, where the superscripts have to interpreted in the usual fashion. 

Any morphisms $f\colon Y \to X$ of algebraic spaces induces a morphism of ringed topoi \cite[\href{https://stacks.math.columbia.edu/tag/03G8}{Tag 03G8}]{StacksProject}, which leads to a derived adjunction $\mathbf{L}f^\ast \colon D(X)\rightleftarrows D(Y) \colon \mathbf{R}f_\ast$ as in the case of schemes; see e.g.\ \cite[\href{https://stacks.math.columbia.edu/tag/07A6}{Tag 07A6}]{StacksProject}. 
When $f$ is quasi-compact and quasi-separated, the derived pushforward preserves objects with quasi-coherent cohomology (see e.g.\ \cite[\href{https://stacks.math.columbia.edu/tag/08FA}{Tag 08FA}]{StacksProject}) and so the adjunction restricts to $D_{\operatorname{qc}}$, i.e.\ we have an adjunction $\mathbf{L}f^\ast \colon D_{\operatorname{qc}}(X)\rightleftarrows D_{\operatorname{qc}}(Y) \colon \mathbf{R}f_\ast$. 

\subsubsection{Perfect complexes}
\label{sec:prelim_spaces_perfect}

Perfect complexes can be defined for any ringed site \cite[\href{https://stacks.math.columbia.edu/tag/08G4}{Tag 08G4}]{StacksProject}, and so, in particular, for $X$ by looking at its small \'{e}tale site.
Recall that a complex is \textbf{strictly perfect} if it is a bounded complex with each term a  direct summand of a finite free module.
A complex is \textbf{perfect} if it is locally strictly perfect. 
The subcategory of $D_{\operatorname{qc}}(X)$ consisting of perfect complexes is denoted by $\operatorname{Perf}(X)$; this is a thick subcategory. 

When $X$ is quasi-compact and quasi-separated, the perfect complexes are exactly the compact objects of $D_{\operatorname{qc}}(X)$, see \cite[\href{https://stacks.math.columbia.edu/tag/09M8}{Tag 09M8}]{StacksProject}. Recall that an object in a triangulated category with arbitrary direct sums is said to be compact if the covariant Hom out of that object commutes with direct sums. 
Moreover, in this case, $D_{\operatorname{qc}}(X)$ is compactly generated by a single perfect complex, see \cite[\href{https://stacks.math.columbia.edu/tag/0AEC}{Tag 0AEC}]{StacksProject}. 
Here, an object $P$ compactly generating means that it is compact and any non-zero object admits a non-zero morphism from some shift of $P$. Any such object is called a \textbf{compact generator}. 
It is worthwhile noting that $P$ being a compact generator for $D_{\operatorname{qc}}(X)$ is equivalent to it being a classical generator for $\operatorname{Perf}(X)$, i.e.\  $\operatorname{Perf}(X) = \langle P \rangle$.
This follows as $D_{\operatorname{qc}}(X)$ is compactly generated, see e.g.\ \cite[\href{https://stacks.math.columbia.edu/tag/09SR}{Tag 09SR}]{StacksProject}.

Lastly, for future reference, let us note the following lemma which shows that one can check perfectness flat locally; see e.g.\ \cite[Lemma 4.1]{Hall/Rydh:2017} or \cite[Lemma 2.2]{DeDeyn/Lank/ManaliRahul/Peng:2025}.
For the definition of the the fpqc topology, see \cite[\href{https://stacks.math.columbia.edu/tag/03MP}{Tag 03MP}]{StacksProject}; important examples of fpqc covers are fppf, and hence, also smooth and \'{e}tale covers.

\begin{lemma}
    \label{lem:perfect_flat_local}
    Let $X$ be an algebraic space and $E\in D_{\operatorname{qc}}(X)$.
    Then $E$ is perfect if and only if there exists an fpqc cover $\{ f_i\colon X_i\to X\}$ such that each $f^\ast_i E$ is perfect.
\end{lemma}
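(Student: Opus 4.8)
The plan is to treat the two implications separately, with the reverse direction carrying all the content.

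For the forward implication I would simply invoke functoriality of perfectness: if $E$ is perfect then $\mathbf{L}f_i^\ast E$ is perfect for every $f_i$. Indeed, the derived pullback of a strictly perfect complex is again strictly perfect (pullback preserves finite generation and direct summands, and is computed termwise on the flat terms), and perfectness is an \'{e}tale-local condition, so it is preserved under an arbitrary morphism. The covering hypothesis plays no role here.

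For the reverse implication the content is fpqc descent of perfectness, and I would proceed by a chain of reductions to an algebraic statement over rings. First, since perfectness is a property defined on the small \'{e}tale site and is therefore \'{e}tale-local, it suffices to show that $E$ becomes perfect after pullback along some \'{e}tale surjection $g\colon V\to X$ with $V$ a scheme. Base-changing the given cover along $g$ produces an fpqc cover $\{V\times_X X_i\to V\}$ (fpqc covers are stable under base change), and by functoriality of $\mathbf{L}(-)^\ast$ the pullback of $g^\ast E$ to each $V\times_X X_i$ agrees with the pullback of the perfect complex $f_i^\ast E$, hence is perfect. Thus I may assume $X$ is a scheme, and then, perfectness being Zariski-local, that $X=\operatorname{Spec}(A)$ with $E\in D(A)$. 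Refining the cover by affine schemes (\'{e}tale covers are themselves fpqc covers, so I may take the $X_i$ to be affine schemes $\operatorname{Spec}(B_i)$ with $f_i$ flat, whence $f_i^\ast E=E\otimes_A^{\mathbf{L}} B_i$) and using quasi-compactness of $\operatorname{Spec}(A)$, I would extract finitely many whose images cover $\operatorname{Spec}(A)$ and set $B$ to be their finite product. This yields a single faithfully flat ring map $A\to B$ for which $E\otimes_A^{\mathbf{L}} B$ is perfect over $B$.

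The crux is then the purely algebraic statement: if $A\to B$ is faithfully flat and $E\otimes_A^{\mathbf{L}} B$ is perfect over $B$, then $E$ is perfect over $A$. I would prove this via the characterization that a complex is perfect if and only if it is pseudo-coherent and of finite tor-amplitude, and then verify that each property descends along faithfully flat maps. Finite tor-amplitude transfers quite directly, since the relevant tor-groups can be detected after the faithfully flat base change (faithful flatness reflects vanishing of cohomology). I expect the main obstacle to be descent of pseudo-coherence, where one must ensure that the \emph{finiteness} of a free resolution descends and not merely its existence after base change; this is handled by building the resolution degree by degree and checking at each stage that finite generation descends along $A\to B$. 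Alternatively, one could phrase the entire descent as descent of dualizable objects along the conservative symmetric monoidal functor $-\otimes_A^{\mathbf{L}} B$. The reductions themselves are routine once one accepts that perfectness is \'{e}tale-local for algebraic spaces and that an fpqc cover of an affine scheme refines to a single faithfully flat map; the statement is, after all, essentially \cite[Lemma 4.1]{Hall/Rydh:2017}.
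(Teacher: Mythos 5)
The paper does not actually prove this lemma; it is quoted from the literature, with \cite[Lemma 4.1]{Hall/Rydh:2017} and \cite[Lemma 2.2]{DeDeyn/Lank/ManaliRahul/Peng:2025} as references. Your argument is, in outline, exactly the standard proof behind those citations: the forward direction by preservation of perfectness under derived pullback, and the reverse direction by reducing (\'{e}tale-locally, then Zariski-locally, then by refining to affines) to a single faithfully flat ring map $A\to B$ and descending perfectness via pseudo-coherence plus finite tor-amplitude. That crux is precisely \cite[\href{https://stacks.math.columbia.edu/tag/068T}{Tag 068T}]{StacksProject}, the same result the paper itself invokes later in the proof of \Cref{lem:perfect_morphism_affine_target}, so your proposal is correct in substance and is essentially the intended proof.

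Two repairs are needed in the details. First, the extraction of finitely many affine pieces is \emph{not} a consequence of quasi-compactness of $\operatorname{Spec}(A)$ alone: a flat morphism of affine schemes need not be open (flat \emph{and locally of finite presentation} would be), so a jointly surjective flat family of affines over $\operatorname{Spec}(A)$ need not admit a finite subfamily whose images cover. What saves the step is the definition of an fpqc covering itself (see \cite[\href{https://stacks.math.columbia.edu/tag/022B}{Tag 022B}]{StacksProject}): every affine open of the target is required to be the union of the images of finitely many affine opens of the sources, and this condition survives your refinements; you should cite that, not bare quasi-compactness. Second, the alternative you float --- descending dualizability along the conservative symmetric monoidal functor $-\otimes_A^{\mathbf{L}} B$ --- does not work as stated: a conservative symmetric monoidal functor does not in general reflect dualizability (e.g.\ the inclusion of a full tensor-closed subcategory is conservative, yet an object may be dualizable in the ambient category with dual lying outside the subcategory). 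To make that route work one needs genuinely more, namely the fpqc descent equivalence $D(A)\simeq \lim D(B^{\otimes\bullet})$ (comonadicity of $-\otimes_A^{\mathbf{L}} B$), or compatibility of base change with the relevant internal Homs, which itself requires a pseudo-coherence hypothesis. So the pseudo-coherence/tor-amplitude argument should be kept as the actual proof; as you wrote it, it is complete and matches the cited references.
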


\subsubsection{Local rings}
\label{sec:prelim_spaces_local_rings}

Let $\overline{x}$ be a geometric point of $X$, i.e.\ $\overline{x}$ is a morphism from the spectrum of an algebraically closed field to $X$. 
For any presheaf $\mathcal{F}$ on $X_{\acute{e}t}$, one defines the stalk of $\mathcal{F}$ at $\overline{x}$ as $\mathcal{F}_{\bar x} := \mathop{\mathrm{colim}}\nolimits _{(U, \overline{u})} \mathcal{F}(U)$ where the colimit runs over the \'{e}tale neighborhoods of $\overline{x}$ in $X$.
The stalks at geometric points defined this way are exactly the stalks at points of the small \'{e}tale site of $X$ (see e.g.\ \cite[\href{https://stacks.math.columbia.edu/tag/04K6}{Tag 04K6}]{StacksProject}). 
The \textbf{\'{e}tale local ring of $X$ at $\overline{x}$} is by definition the stalk of the structure sheaf at $\overline{x}$, i.e.\ $\mathcal{O}_{X,\overline{x}}:=(\mathcal{O}_X)_{\overline{x}}$. 
This is in fact a local ring (as can be seen from the following paragraph).

To make the latter more explicit, pick an \'{e}tale neighborhood $(U,\overline{u})$ of $\overline{x}$ where $U$ a scheme. 
Then, from \cite[\href{https://stacks.math.columbia.edu/tag/04KF}{Tag 04KF}]{StacksProject}, we have isomorphisms
\begin{equation}\label{eq:concrete_etale_local_ring}
    \mathcal{O}_{X,\overline{x}} \cong \mathcal{O}_{U,\overline{u}} \cong \mathcal{O}_{U,u}^{sh},
\end{equation}
where $\mathcal{O}_{U,u}^{sh}$ is the strict Henselization of $\mathcal{O}_{U,u}$ and $u\in U$ is the point at which $\overline{u}$ is centered.
We refer to \cite[\href{https://stacks.math.columbia.edu/tag/0BSK}{Tag 0BSK}]{StacksProject} for the definition of (strict) Henselizations of local rings.
Because of \eqref{eq:concrete_etale_local_ring}, for arbitrary $x\in |X|$, by abuse of notation we sometimes denote $\mathcal{O}_{X,x}^{sh}:=\mathcal{O}_{X,\overline{x}}$ where $\overline{x}$ is any geometric point lying over $x$.

Lastly, let us briefly define the Henselian local ring at a point $x\in |X|$.
Let
\begin{displaymath}
    \mathcal{O}_{X, x}^ h := \mathop{\mathrm{colim}}\nolimits_{(U,u)} \Gamma (U, \mathcal{O}_ U)
\end{displaymath}
where now the colimit is taken over the `elementary \'{e}tale neighborhoods' (see \cite[\href{https://stacks.math.columbia.edu/tag/0BGU}{Tag 0BGU}]{StacksProject} for the definition).
As in the previous paragraph, one can make this more explicit in terms of a covering. 
Indeed, by \cite[\href{https://stacks.math.columbia.edu/tag/0EMY}{Tag 0EMY}]{StacksProject}, picking an elementary etale neighborhood $(U,u)\to (X,x)$ one has $\mathcal{O}_{X, x}^ h \cong \mathcal{O}_{U,u}^{h}$, where $\mathcal{O}_{U,u}^{h}$ is the Henselization of the local ring $\mathcal{O}_{U,u}$ of $U$ at $u$.
In particular, 
\begin{equation}\label{eq:hens_local_ring}
    (\mathcal{O}_{X, x}^ h)^{sh}=\mathcal{O}_{X, x}^{sh},
\end{equation}
see e.g.\ \cite[\href{https://stacks.math.columbia.edu/tag/0EMZ}{Tag 0EMZ}]{StacksProject}.

\subsubsection{Residue fields}
\label{sec:prelim_spaces_residue_fields}

Pick a point $x\in |X|$. 
When $X$ is decent there exists, by \cite[\href{https://stacks.math.columbia.edu/tag/03K4}{Tag 03K4}]{StacksProject}, a unique monomorphism $\operatorname{Spec}(k)\to X$ representing $x$.
The field $k$ appearing is called the \textbf{residue field of $X$ at $x$} and is denoted $\kappa(x)$. 

\subsubsection{Regularity}
\label{sec:prelim_spaces_regularity}

Recall that $X$ is \textbf{regular} if there exists an \'{e}tale surjective morphism from a regular scheme, see e.g.\ \cite[\href{https://stacks.math.columbia.edu/tag/03E5}{Tag 03E5}]{StacksProject}. 
Furthermore, a point $x\in |X|$ is \textbf{regular} if there exists an \'{e}tale morphism $f\colon U \to X$ from a scheme and a point $u\in U$ such that $f(u)=x$ and the stalk $\mathcal{O}_{U,u}$ is a regular local ring; this is well-defined by \cite[\href{https://stacks.math.columbia.edu/tag/0AH7}{Tag 0AH7}]{StacksProject}.
It is straightforward to see that an algebraic space is regular if and only if all of its points are regular.
Moreover, by \cite[\href{https://stacks.math.columbia.edu/tag/0AHA}{Tag 0AHA}]{StacksProject}, a point $x\in |X|$ is regular if and only if the \'{e}tale local ring $\mathcal{O}_{X,\overline{x}}$ is a regular local ring for any (equivalently some) geometric point $\overline{x}$ lying over $x$.

When $X$ is a Noetherian algebraic space, it follows from \cite[Theorem 3.7]{DeDeyn/Lank/ManaliRahul/Peng:2025} that regularity of $X$ is equivalent to $D^b_{\operatorname{coh}}(X) = \operatorname{Perf}(X)$. 
Furthermore, when $X$ is a separated Noetherian algebraic space of finite Krull dimension, by \cite[Theorem 5.1]{DeDeyn/Lank/ManaliRahul/Peng:2025}, regularity of $X$ is equivalent to $\operatorname{Perf}(X)$ being strongly generated, i.e.\ $\operatorname{Perf}(X)=\langle G \rangle_{n+1}$ for some $G\in \operatorname{Perf}(X)$ and $n\geq 0$.

\subsection{Morphisms}
\label{sec:prelim_morphisms}

Let $f\colon Y\to X$ now be more generally any morphism of quasi-compact quasi-separated algebraic spaces. 
Extending \cite{Lipman/Neeman:2007} to algebraic spaces we obtain the following.

\begin{definition}
    \label{def:q-perfect}
    We say $f$ is \textbf{quasi-perfect} if $\mathbf{R}f_\ast \operatorname{Perf}(Y)\subseteq \operatorname{Perf}(X)$, i.e.\ pushforward preserves compacts. 
\end{definition}

The pushforward preserving compact objects has many different characterizations. Let us just mention \cite[Theorem 3.3]{Balmer/Dellambrogio/Sanders:2016} for a nice list of characterizations when viewing the above through a tensor triangular lens. 

\begin{warning}
    A morphism of schemes being quasi-perfect should not be confused with it being a `perfect morphisms', another notion that appears in the literature. These notions coincide, for example, for a proper morphisms of Noetherian schemes \cite[Theorem 1.2]{Lipman/Neeman:2007}, but differ in general.
\end{warning}

\section{Proofs}
\label{sec:proofs}

We prove our main results. To start, we need a few lemmas that allow one to check quasi-perfectness using flat covers.

\begin{lemma}
    \label{lem:perfect_morphism_via_generation}
    A morphism $f\colon Y \to X$ of quasi-compact quasi-separated algebraic spaces is quasi-perfect if and only if $\mathbf{R}f_\ast P\in \operatorname{Perf}(X)$ for any compact generator $P$ of $Y$.
\end{lemma}

\begin{proof}
    If $f$ is quasi-perfect, it is clear that $\mathbf{R}f_\ast P\in \mathbf{R}f_\ast \operatorname{Perf}(Y)\subseteq \operatorname{Perf}(X)$.
    For, the converse direction, let $P$ be compact generator on $Y$. Our hypothesis is $\mathbf{R}f_\ast P\in \operatorname{Perf}(X)$. 
    The desired claim then follows from the following string of inclusions:
    \begin{displaymath}
        \mathbf{R}f_\ast \operatorname{Perf}(Y) = \mathbf{R}f_\ast \langle P \rangle \subseteq \langle \mathbf{R}f_\ast P \rangle \subseteq \operatorname{Perf}(X).\qedhere
    \end{displaymath}
\end{proof}

\begin{lemma}
    \label{lem:faithfully_flat_detect_perfect}
    Let $f\colon Y\to X$ be a morphism of quasi-compact quasi-separated algebraic spaces and let $s\colon U \to X$ be a flat morphism (from an algebraic space). Consider the fibered square
    \begin{equation}\label{dia:base_change_square}
        \begin{tikzcd}
            {Y\times_X U} & U \\
            Y & {X}\rlap{ .}
            \arrow["{f^\prime}", from=1-1, to=1-2]
            \arrow["{s^\prime}"', from=1-1, to=2-1]
            \arrow["s", from=1-2, to=2-2]
            \arrow["f"', from=2-1, to=2-2]
            \arrow["\lrcorner"{anchor=center, pos=0.125}, draw=none, from=1-1, to=2-2]
        \end{tikzcd}
    \end{equation}    \begin{enumerate}
        \item If $f$ is quasi-perfect and $s$ is quasi-affine, then  $f^\prime$ is quasi-perfect. 
        \item If $s$ is surjective and $f^\prime$ is quasi-perfect, then $f$ is quasi-perfect.
    \end{enumerate}
\end{lemma}

\begin{proof}
    To show the first claim, first note that when $s$ is an quasi-affine morphism so is its base change $s^\prime$. Consequently, $\mathbf{R}s^\prime_\ast$ is conservative (see e.g.\ \cite[Corollary 2.8]{Hall/Rydh:2017}) and it follows that for any compact generator $Q$ of $Y$ its pullback $(s^\prime)^\ast Q$ is a compact generator of $Y\times_X U$ (see e.g.\ \cite[\href{https://stacks.math.columbia.edu/tag/0E4R}{Tag 0E4R}]{StacksProject}).
    Thus, as $f$ is assumed quasi-perfect, $\mathbf{R}f_\ast Q$ is perfect and so also is its pullback $s^\ast \mathbf{R}f_\ast Q$. From flat base change, it follows that 
    \begin{equation}
        \label{eq:flat_bc}
        \mathbf{R}f^\prime_\ast (s^\prime)^\ast Q \cong s^\ast \mathbf{R}f_\ast Q.
    \end{equation}
    Hence, \Cref{lem:perfect_morphism_via_generation} implies $f^\prime$ is a quasi-perfect morphism.

    For the second claim, assume now that $s$ is surjective and $f^\prime$ is quasi-perfect. 
    Then $\mathbf{R}f^\prime_\ast (s^\prime)^\ast Q$ is perfect and so is $s^\ast \mathbf{R}f_\ast Q$ by \cref{eq:flat_bc}. 
    As $s$ is surjective, so is $s^\prime$, and so it is an fpqc cover as it is quasi-compact.
    Therefore, $\mathbf{R}f_\ast Q$ is perfect as this can be checked flat locally by \Cref{lem:perfect_flat_local}.
    By \Cref{lem:perfect_morphism_via_generation}, we have that $f$ is quasi-perfect, which completes the proof.
\end{proof}

There are three important cases of faithfully flat morphisms that we will apply \Cref{lem:faithfully_flat_detect_perfect}.

\begin{example}
    \label{ex:adic_completion_Henselization}
    Let $(R,\mathfrak{m})$ be a Noetherian local ring. 
    To this one can associate a few other Noetherian local rings:
    \begin{enumerate}
        \item the $\mathfrak{m}$-adic completion of $R$, denoted $\widehat{R}$,
        \item the Henselization of $R$, denoted $R^h$, and
        \item the strict Henselization of $R$, denoted $R^{sh}$.
    \end{enumerate}
    There are local ring morphisms $R\to \widehat{R}$ and $R \to R^ h \to R^{sh}$ which are flat
    and consequently faithfully flat; see e.g.\ \cite[\href{https://stacks.math.columbia.edu/tag/00MC}{Tag 00MC} and \href{https://stacks.math.columbia.edu/tag/07QM}{Tag 07QM}]{StacksProject}.
    (As an aside, there is also a flat local ring morphism $R^h\to \widehat{R}$, but we do not need it).
\end{example}

We now move to our main result.

\begin{theorem}
    \label{thm:quasi-perfectness_locality}
    Let $f\colon Y\to X$ be a proper morphism of Noetherian algebraic spaces. Suppose $A_\# \in \{\mathcal{O}_{X,\#}^h, \mathcal{O}_{X,\#}^{sh} \}$. 
    Then the following are equivalent:
    \begin{enumerate}
       \item\label{item1} $f$ is quasi-perfect,
       \item\label{item2} the base change of $f$ to $\operatorname{Spec}(A_p)$ is quasi-perfect for all $p \in |X|$.
    \end{enumerate}
    Furthermore, if $s\colon U\to X$ is an \'{e}tale surjective morphism from a quasi-compact scheme, one can take $A_\# \in \{ \mathcal{O}_{U,\#}, \widehat{\mathcal{O}}_{U,\#}\}$
    and let $p$ run over all points in $U$.
\end{theorem}

Before proving this, we recall a fact whose proof we add for lack of finding a reference.

\begin{lemma}
    \label{lem:affine_to_qs_space_is_quasi-affine_morphism}
    Let $X$ be a quasi-separated algebraic space. Then any morphism from an affine scheme to $X$ is quasi-affine.
\end{lemma}

\begin{proof}
    Suppose $U\to X$ is a map from an affine scheme. So show it is affine, by definition, we need to show, for any affine scheme $V$ and morphisms $V\to X$, that $U\times_X V $ is quasi-affine.
    As the latter is just the pullback of the morphisms $U\times V\to X\times X$ along the diagonal $X\to X\times X$, this follows from the fact the diagonal of $X$ is quasi-affine, see e.g.\ \cite[\href{https://stacks.math.columbia.edu/tag/03HK}{Tag 03HK} \& \href{https://stacks.math.columbia.edu/tag/02LR}{Tag 02LR}]{StacksProject}. \qedhere
\end{proof}

\begin{proof}[Proof of \Cref{thm:quasi-perfectness_locality}]
    $\eqref{item1}\implies\eqref{item2}$: 
    In all cases, the morphisms $\operatorname{Spec}(A_p)\to X$ is flat and quasi-affine (the latter by \Cref{lem:affine_to_qs_space_is_quasi-affine_morphism} because the source is affine and the target is quasi-separated). 
    Hence, it follows from \Cref{lem:faithfully_flat_detect_perfect} that the base change of $f$ to $A_p$ is quasi-perfect.
    Note that this direction did not require $f$ to be proper. 

    $\eqref{item2}\implies\eqref{item1}$:
    Let $s\colon U\to X$ be an \'{e}tale surjective morphism from a quasi compact, and hence Noetherian, scheme. 
    We first reduce to the case $A_p=\mathcal{O}_{U,p}$ and $p\in U$.
    So pick a point $p\in U$ and let $R:=\mathcal{O}_{U,p}$.
    Observe that $R^{sh}=\mathcal{O}_{X,\overline{x}}=:\mathcal{O}_{X,p}^{sh}$ by \cref{eq:concrete_etale_local_ring}, for $\overline{x}$ a geometric point lying over $s(p)$, and that $R^{sh}=(\mathcal{O}_{X,p}^{h})^{sh}$  by \cref{eq:hens_local_ring}.
    Consequently, as $R\to \widehat{R}$, $\mathcal{O}_{X,p}^{h}\to R^{sh}$ and $R\to R^{sh}$ are faithfully flat (see \Cref{ex:adic_completion_Henselization}), it follows from \Cref{lem:faithfully_flat_detect_perfect} that the base change to one of them is quasi-perfect if and only if the base change to any of them is quasi-perfect.
    Thus, as $g$ is surjective, and so each point $x\in |X|$ is reached by a point $u\in U$, it suffices to show:
    \begin{equation}\tag{$\star$}\label{eq:star}
        \begin{minipage}{10cm}
        {If the base change of $f$ to $\mathcal{O}_{U,p}$ is quasi-perfect for all $p \in U$, then $f$ is quasi-perfect.}
        \end{minipage}
    \end{equation}
    The proof of \eqref{eq:star} is similar to that of \Cref{lem:faithfully_flat_detect_perfect}; however, note that $\operatorname{Spec}(\mathcal{O}_{U,p})\to X$ is generally not surjective so we need to do some extra work.
    Pick a point $p\in U$ and look at the fibered square 
    \begin{displaymath}
        \begin{tikzcd}
            {Y\times_X \operatorname{Spec}(\mathcal{O}_{U,p})} & {\operatorname{Spec}(\mathcal{O}_{U,p})} \\
            Y & X
            \arrow["{f_p}", from=1-1, to=1-2]
            \arrow["{t_p}"', from=1-1, to=2-1]
            \arrow["{s_p}", from=1-2, to=2-2]
            \arrow["f"', from=2-1, to=2-2]
            \arrow["\lrcorner"{anchor=center, pos=0.125}, draw=none, from=1-1, to=2-2]
        \end{tikzcd}
    \end{displaymath}
    where $s_p$ is the natural morphism and $f_p$ and $t_p$ are the projections.

    Again, choose any compact generator $Q$ of $Y$.
    The hypothesis implies that $\mathbf{R}(f_p)_\ast t^\ast_p Q$ is perfect, and so flat base change tells us $s_p^\ast \mathbf{R}f_\ast Q=(s^\ast \mathbf{R}f_\ast Q)_p$ is perfect.
    As $U$ is Noetherian, and as $p\in U$ was arbitrary, we have that $s^\ast\mathbf{R}f_\ast Q$ is perfect (this can be checked affine locally via \cite[Theorem 4.1]{Avramov/Iyengar/Lipman:2010})--this uses the fact that $s^\ast\mathbf{R}f_\ast Q$ is bounded and coherent by virtue of $f$ being proper.
    Consequently, so is $\mathbf{R}f_\ast Q$ by \Cref{lem:perfect_flat_local} and $f$ is quasi-perfect by \Cref{lem:perfect_morphism_via_generation}.
\end{proof}

\begin{corollary}
    \label{cor:q-perfect_locus}
    Let $f\colon Y\to X$ be a proper morphism of Noetherian algebraic spaces. Suppose that $A_\#$ and $s\colon U\to X$ are as in \Cref{thm:quasi-perfectness_locality}. Then the locus
    \begin{displaymath}
        S:=\left\{ p \mid \text{the projection } Y\times_X \operatorname{Spec}(A_p)\to \operatorname{Spec}(A_p) \textrm{ is quasi-perfect}\right\}
    \end{displaymath}
    is an open subset of $X$ or $U$ depending on the choice of $A_\#$. 
\end{corollary}

\begin{proof}
    By the proof of \Cref{thm:quasi-perfectness_locality} (i.e.\ using \Cref{lem:faithfully_flat_detect_perfect}) we know that 
    \begin{displaymath}
        \begin{aligned}
            S_X:=
            &\left\{ x\in|X| \mid \text{base change to }\mathcal{O}_{X,x}^{sh}  \textrm{ is quasi-perfect}\right\}
            \\ =&\left\{ x\in|X| \mid \text{base change to }\mathcal{O}_{X,x}^{h}  \textrm{ is quasi-perfect}\right\}
        \end{aligned}
    \end{displaymath}
    and 
    \begin{displaymath}
        \begin{aligned}
            S_U:=
            &\left\{ u\in U \mid \text{base change to }\mathcal{O}_{U,u} \textrm{ is quasi-perfect}\right\}
            \\ =&\left\{ u\in U \mid \text{base change to }\widehat{\mathcal{O}}_{U,u}  \textrm{ is quasi-perfect}\right\}.
        \end{aligned}
    \end{displaymath}
    Let us start by showing that 
    \begin{equation}
        \label{eq:quasi-perfect_locus_permanence}
        s^{-1}(S_X)=S_U.
    \end{equation}
    To this end, observe by \cref{eq:concrete_etale_local_ring} that $\mathcal{O}_{X,s(u)}^{sh} = \mathcal{O}^{sh}_{U,u}$ for any $u\in U$.
    As $\mathcal{O}_{U,u}\to \mathcal{O}^{sh}_{U,u}$ is faithfully flat, quasi-perfectness of base change to either of them is equivalent by \Cref{lem:faithfully_flat_detect_perfect}.
    Therefore, 
    \begin{displaymath}
        \begin{aligned}
            s^{-1}(S_X) 
            &= \left\{ u\in U \mid \text{base change to }\mathcal{O}_{X,s(u)}^{sh} \textrm{ is quasi-perfect}\right\} 
            \\&= \left\{ u\in U \mid \text{base change to }\mathcal{O}_{U,u}^{sh} \textrm{ is quasi-perfect}\right\} 
            \\&= \left\{ u\in U \mid \text{base change to }\mathcal{O}_{U,u} \textrm{ is quasi-perfect}\right\} = S_U.
        \end{aligned}  
    \end{displaymath}  
    As a consequence of \cref{eq:quasi-perfect_locus_permanence} and that $s\colon U\to |X|$ is an open quotient map, it follows that $S_X$ is open if and only if $S_U$ is open.
    Thus, picking another cover if necessary we may assume $U$ is affine and it suffices to show $S_U$ is open to finish the proof.

    Therefore, we may and will simply assume that $X$ is an affine scheme from now on. Choose any compact generator $Q$ of $Y$. 
    As $f$ is proper, $\mathbf{R}f_\ast Q$ is bounded coherent and so by \cite[Proposition 3.5]{Letz:2021} it follows that
    \begin{displaymath}
        S^{\prime}:=\left\{ x\in X \mid (\mathbf{R}f_\ast Q)_x \in \langle \mathcal{O}_{X,x}\rangle =\operatorname{Perf}(\mathcal{O}_{X,x}) \right\}
    \end{displaymath}
    is a Zariski open subset of $X$. 
    Considering the fibered square
    \begin{displaymath}
        \begin{tikzcd}
            {Y\times_X \operatorname{Spec}(\mathcal{O}_{X,x})} & {\operatorname{Spec}(\mathcal{O}_{X,x})} \\
            Y & X
            \arrow["{f_x}", from=1-1, to=1-2]
            \arrow["{t_x}"', from=1-1, to=2-1]
            \arrow["{s_x}", from=1-2, to=2-2]
            \arrow["f"', from=2-1, to=2-2]
            \arrow["\lrcorner"{anchor=center, pos=0.125}, draw=none, from=1-1, to=2-2]
        \end{tikzcd}
    \end{displaymath}
    we see that, using flatness of $s_x$ and that it is affine so that $t_x^\ast Q$ is a compact generator for $Y\times_X \operatorname{Spec}(\mathcal{O}_{X,x})$,
    \begin{displaymath}
        \begin{aligned}
            x\in S \iff
            & \text{base change to }\mathcal{O}_{X,x} \textrm{ is quasi-perfect}\\\iff&    \mathbf{R}(f_x)_\ast t_x^\ast Q \text{ is perfect} 
            \\\iff&    (\mathbf{R}f_\ast Q)_x= s_x^\ast \mathbf{R}f_\ast Q \text{ is perfect} \iff x\in S^\prime
        \end{aligned}
    \end{displaymath}
    finishing the proof.     
\end{proof}

\begin{remark}
    One can think of the subset $S$ appearing in \Cref{cor:q-perfect_locus} as the `quasi-perfect locus of $f$'.
    It follows from the first part of the proof that in general, i.e.\ regardless of properness of $f$, one can compute it \'{e}tale locally (\cref{eq:quasi-perfect_locus_permanence}) and that the specific choice of $A_\#$ does not matter.
    Moreover, this subset is open when $f$ is proper; it would be interesting to know whether or not this holds for more general morphisms. 

    It is also worthwhile to point out that, quite generally for any morphisms $f\colon Y\to X$ (of quasi-compact quasi-separated algebraic spaces) with separated target, we have an inclusion of the regular locus $\operatorname{reg}(X)\subseteq S$.
    Indeed, if $p\in|X|$ is a regular point, its \'{e}tale local ring is regular and so the base change to it is quasi-perfect.
\end{remark}

As an application of the `quasi-perfect locus', we show it can be used to descent properties of the regular locus along proper morphisms.
Moreover, note that the result requires no separations assumptions on the target and that the assumption on $X$ in the statement is a necessary condition due to the conclusion of the proposition.

\begin{proposition}
    \label{prop:descending_open_in_regular_locus}
    Let $X$ be a Noetherian algebraic space containing at least one codimension $0$ point\footnote{See \cite[\href{https://stacks.math.columbia.edu/tag/04NA}{Tag 04NA}]{StacksProject} for the relevant definition.} whose \'{e}tale local ring is reduced (e.g.\ this is the case when $X$ is reduced).
    Suppose further that $f\colon Y \to X$ is a proper surjective morphism.
    Then the regular locus of $X$ contains a nonempty open subset (i.e.\ is $J\textrm{-}0$) whenever $Y$ has this property.
\end{proposition}

\begin{proof}
    The proof starts by a series of reductions.
    Let us denote by $\eta$ the codimension zero point whose \'{e}tale local ring is reduced. 
    \begin{itemize}
        \item As $X$ admits an open dense subscheme, which in particular contains $\eta$, see e.g.\ \cite[\href{https://stacks.math.columbia.edu/tag/06NH}{Tag 06NH}]{StacksProject}, we may assume that $X$ is a scheme.
        Moreover, by choosing any affine open neighborhood $U$ of $\eta$ and by restricting to $f^{-1}(U)\to U$, we may assume $X$ is affine and, in particular, separated.
        \item Next, let $S$ denote the open subset from \Cref{cor:q-perfect_locus}; as $\eta\in S$, since $\mathcal{O}_{X,\eta}$ is a field by assumption, $S$ is non-empty.
        It follows from \Cref{thm:quasi-perfectness_locality} that the restriction $f^{-1}(S)\to S$ is quasi-perfect. So we may assume $f$ is quasi-perfect. 
        \item Let $V$ be a non-empty open subset of $Y$ contained in the regular locus, which exists by assumption. 
        If $V\neq Y$, then $Y\setminus V$ is a properly contained closed subset of $Y$. 
        As $f$ is proper, $f(Y\setminus V)$ is closed in $X$. 
        Moreover, $f^{-1}(X\setminus f(Y\setminus V))\subset V$; so restricting to the open subset $X\setminus f(Y\setminus V) \subset X$ allows us to reduce to the setting where $f$ is a quasi-perfect proper surjective morphism from a regular scheme.
        (Note that whether or not $\eta\in X\setminus f(Y\setminus V)$ does not matter, the existence of $\eta$ was needed in the previous bullet.)
    \end{itemize}
    In order to finish the proof, is suffices to show that $X$ is regular. So, let $p\in X$ be a closed point and denote the associated closed immersion by $i\colon \mathrm{Spec}(\kappa(p)) \to X$. 
    Consider the fibered square
    \begin{displaymath}
        \begin{tikzcd}
            {Y\times_X \operatorname{Spec}(\kappa(p))} & {\operatorname{Spec}(\kappa(p))} \\
            Y & {X.}
            \arrow["{f^\prime}", from=1-1, to=1-2]
            \arrow["{i^\prime}"', from=1-1, to=2-1]
            \arrow["i", from=1-2, to=2-2]
            \arrow["f"', from=2-1, to=2-2]
            \arrow["\lrcorner"{anchor=center, pos=0.125}, draw=none, from=1-1, to=2-2]
        \end{tikzcd}
    \end{displaymath}
    Clearly, $\kappa(p)$ being a field, $\mathbf{R}f^\prime_\ast \mathcal{O}_{Y\times_X} \operatorname{Spec}(\kappa(p))$ is a (non-zero, as $f$ is surjective!) finite direct sum of shifts of $\mathcal{O}_{\operatorname{Spec}(\kappa(p))}$. 
    Thus, commutativity of the diagram shows we have $i_\ast \mathcal{O}_{\operatorname{Spec}(\kappa(p))}\in \langle \mathbf{R}f_\ast D^b_{\operatorname{coh}}(Y) \rangle_1$. 
    Regularity of $Y$ ensures that $\operatorname{Perf}(Y) = D^b_{\operatorname{coh}}(Y)$ and quasi-perfectness of $f$ tells us $\mathbf{R}f_\ast \operatorname{Perf}(Y)\subseteq \operatorname{Perf}(X)$. 
    Consequently, $i_\ast \mathcal{O}_{\operatorname{Spec}(\kappa(p))} \in \operatorname{Perf}(X)$, which ensures that $p$ is a regular point of $X$. 
    As we have shown every closed point of $X$ is regular, it follows that $X$ is regular as desired.
\end{proof}

\begin{appendix}

\section{Quasi-perfect blowups detect regularity}
\label{app:quasi-perfect_detect_regularity}

The goal of this appendix is to prove \Cref{prop:regularity_via_blowups}, which gives a relationship between regularity of a Noetherian algebraic space $X$ and having `enough' quasi-perfectness of morphisms with target $X$. In an earlier version of our work, we recorded \Cref{prop:regularity_via_blowups} as an observation. However, Bhatt privately communicated to us preliminary work of his, which is to appear later, that subsumes \Cref{prop:regularity_via_blowups}.
However, we felt it worthwhile to record this (and some of the lemmas used in its proof) in an appendix.

We start with the following lemma, which should be compared to \cite[\href{https://stacks.math.columbia.edu/tag/00OC}{Tag 00OC}]{StacksProject} and gives a `spacy' version of this. 

\begin{lemma}
    \label{lem:residue_field_perfect_gives_regular}
    Let $X$ be a decent (e.g.\ a Noetherian) algebraic space.
    Suppose $x\in |X|$ is a closed point such that $i_{x,\ast}\mathcal{O}_{\operatorname{Spec(\kappa(x))}}$ is perfect, where $i_x\colon {\operatorname{Spec}}(\kappa(x))\to X$ denotes the residue field of $X$ at $x$. 
    Then $x$ is a regular point of $X$. 
\end{lemma}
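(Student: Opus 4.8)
The plan is to reduce, via an \'{e}tale chart, to the corresponding statement for schemes, namely the Auslander--Buchsbaum--Serre theorem in the form of \cite[\href{https://stacks.math.columbia.edu/tag/00OC}{Tag 00OC}]{StacksProject}: a Noetherian local ring whose residue field has finite projective dimension is regular. By the characterization of regular points recalled in \S\ref{sec:prelim_spaces_regularity} (\cite[\href{https://stacks.math.columbia.edu/tag/0AH7}{Tag 0AH7}]{StacksProject}), it suffices to produce an \'{e}tale morphism $p\colon U\to X$ from a scheme together with a point $u\in U$ lying over $x$ for which $\mathcal{O}_{U,u}$ is regular.

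First I would fix an \'{e}tale surjection $p\colon U\to X$ with $U$ a scheme and choose $u\in U$ over $x$. The residue-field monomorphism $i_x\colon\operatorname{Spec}(\kappa(x))\to X$ sits in a cartesian square whose base change along $p$ I denote $i_U\colon Z_U\to U$, where $Z_U:=\operatorname{Spec}(\kappa(x))\times_X U$. Since $p$ is flat, flat base change identifies $\mathbf{L}p^\ast \mathbf{R}i_{x,\ast}\mathcal{O}_{\operatorname{Spec}(\kappa(x))}$ with $\mathbf{R}i_{U,\ast}\mathcal{O}_{Z_U}$. Moreover $Z_U$ is \'{e}tale over the field $\kappa(x)$, hence a disjoint union $\coprod_j\operatorname{Spec}(\kappa(u_j))$ indexed by the points $u_j\in U$ lying over $x$, with each $\kappa(u_j)/\kappa(x)$ finite separable. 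As derived pullback always preserves perfect complexes, the hypothesis gives that $\mathbf{R}i_{U,\ast}\mathcal{O}_{Z_U}$ is perfect on $U$.

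Next I would pass to the stalk at $u$. The distinct points of $U$ over $x$ do not specialize to one another, being closed points of the quasi-finite fibre over the local ring of $X$ at $x$; consequently the stalk of $\mathbf{R}i_{U,\ast}\mathcal{O}_{Z_U}$ at $u$ is precisely $\kappa(u)$, regarded as an $\mathcal{O}_{U,u}$-module via $\mathcal{O}_{U,u}\twoheadrightarrow\kappa(u)$. Since the stalk of a perfect complex is perfect, $\kappa(u)$ has finite projective dimension over $\mathcal{O}_{U,u}$. Invoking \cite[\href{https://stacks.math.columbia.edu/tag/00OC}{Tag 00OC}]{StacksProject} (with Noetherianity of $\mathcal{O}_{U,u}$ either assumed or extracted from the finite free resolution furnished by perfectness) yields that $\mathcal{O}_{U,u}$ is regular, whence $x$ is a regular point.

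The main obstacle I anticipate is the bookkeeping around the base change of the residue-field monomorphism: one must check that $i_x$ base changes to the expected finite \'{e}tale $\kappa(x)$-scheme and that its pushforward has $\mathcal{O}_{Z_U}$ as underived pullback, so that the stalk computation isolates exactly $\kappa(u)$ rather than a larger sum of residue fields of other points over $x$. Equivalently, one can sidestep the scheme chart and argue directly with a geometric point $\overline{x}$ over $x$: the exact stalk functor sends the perfect complex $\mathbf{R}i_{x,\ast}\mathcal{O}_{\operatorname{Spec}(\kappa(x))}$ to a perfect complex over the strictly Henselian local ring $\mathcal{O}_{X,\overline{x}}$, and a short computation identifies this stalk with the residue field $\kappa(\overline{x})=\mathcal{O}_{X,\overline{x}}/\mathfrak{m}$; then Auslander--Buchsbaum--Serre shows $\mathcal{O}_{X,\overline{x}}$ is regular and \cite[\href{https://stacks.math.columbia.edu/tag/0AHA}{Tag 0AHA}]{StacksProject} gives that $x$ is regular. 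Either way, the crux is the identification of the pulled-back skyscraper with the residue field.
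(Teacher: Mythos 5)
Your proposal is correct and follows essentially the same route as the paper: pull back along an \'{e}tale chart, use flat base change (valid since $i_x$ is a closed immersion by Tags \href{https://stacks.math.columbia.edu/tag/0H1R}{0H1R} and \href{https://stacks.math.columbia.edu/tag/0H1U}{0H1U}, which is also where decency enters) to identify the pullback with a skyscraper, take the stalk at $u$, and conclude via Tag \href{https://stacks.math.columbia.edu/tag/00OC}{00OC} --- whose actual statement indeed yields ``regular Noetherian'' from a finite free resolution alone, so your parenthetical about Noetherianity is fine. The only difference is that the paper invokes decency through the elementary \'{e}tale neighborhood of Tag \href{https://stacks.math.columbia.edu/tag/0BBP}{0BBP}, arranging a \emph{unique} point $u$ over $x$ with $\kappa(x)\cong\kappa(u)$ so the fiber is literally one point, whereas you take an arbitrary chart and rule out contributions from other points $u_j$ over $x$ via the (correct, since \'{e}tale morphisms have discrete fibers) no-specialization argument.
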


\begin{proof}
    Let $(U,u)\to (X,x)$ be the `elementary \'{e}tale neighborhood' of \cite[\href{https://stacks.math.columbia.edu/tag/0BBP}{Tag 0BBP}]{StacksProject}, i.e.\ $U$ is an affine scheme, $u$ is the only point of $U$ lying over $x$, and the induced homomorphism of residue fields $\kappa(x)\to \kappa(u)$ is an isomorphism.
    We claim that the pullback of $x$ under $f\colon U\to X$ can be identified with $u$, i.e.\ we have a cartesian square
    \begin{displaymath}
        \begin{tikzcd}[ampersand replacement=\&, sep = 1.5em]
            {\operatorname{Spec} (\kappa(u))} \& {\operatorname{Spec} (\kappa(x))} \\
            U \& X\rlap{ .}
            \arrow[equal, from=1-1, to=1-2]
            \arrow["i_u"', from=1-1, to=2-1]
            \arrow["i_x", from=1-2, to=2-2]
            \arrow["f",from=2-1, to=2-2]
            \arrow["\lrcorner"{anchor=center, pos=0.125}, draw=none, from=1-1, to=2-2]
        \end{tikzcd}
    \end{displaymath}
    Indeed, let $U_x$ denote the pullback. 
    It follows from categorical nonsense that $U_x\to U$ is a monomorphism (as $\operatorname{Spec}(\kappa(x))\to X$ is).
    Moreover, as $U_x\to \operatorname{Spec} (\kappa(x))$ is \'{e}tale, it is a disjoint union of fields.
    However, as each field corresponds to a distinct point lying over $x$  (they all go to different points as monomorphisms of schemes are injective), $U_x$ is the spectrum of a field.
    Consequently, we must have  $U_x=\operatorname{Spec} (\kappa(u))$ as $U_x\to U$ is a monomorphism.
    In particular, it also follows that $u$ is necessarily a closed point as $i_x$ is a closed immersion by e.g.\ \cite[\href{https://stacks.math.columbia.edu/tag/0H1R}{Tag 0H1R} \& \href{https://stacks.math.columbia.edu/tag/0H1U}{Tag 0H1U}]{StacksProject}.

    Next, using flat base change, one sees 
    \begin{displaymath}
        f^\ast i_{x,\ast}\mathcal{O}_{\operatorname{Spec(\kappa(x))}} = i_{u,\ast}\mathcal{O}_{\operatorname{Spec(\kappa(u))}}
    \end{displaymath}
    and so also perfect.
    Taking the stalk at $u$, one has the residue field $\kappa(u)$ is a perfect $\mathcal{O}_{U,u}$-module.
    Thus, $u$ is a regular point by \cite[\href{https://stacks.math.columbia.edu/tag/00OC}{Tag 00OC}]{StacksProject} and so $x$ is a regular point of $X$ by definition.
\end{proof}

The following lemma is the main crucial, albeit simple, observation. 

\begin{lemma}
    \label{lem:regularity_via_blowups}
    Let $X$ be Noetherian algebraic space $X$.
    If the blowup of $X$ along a closed point $p\in |X|$ induces a quasi-perfect morphism, then $p$ is regular.
\end{lemma}

\begin{proof}
    Denote by $i\colon \operatorname{Spec}(\kappa(p))\to X$ the residue field associated to our closed point $p$; this is a closed immersion, see e.g.\ \cite[\href{https://stacks.math.columbia.edu/tag/0H1R}{Tag 0H1R} \& \href{https://stacks.math.columbia.edu/tag/0H1U}{Tag 0H1U}]{StacksProject}.
    Consider the fibered square
    \begin{equation}
        \label{dia:blowup_square}
        \begin{tikzcd}[sep =2em]
           E & \operatorname{Bl}_p X\;\rlap{$=: X^\prime$} \\
           {\operatorname{Spec}(\kappa(p))} & X
           \arrow["{j}", hook, from=1-1, to=1-2]
           \arrow["{g}"', from=1-1, to=2-1]
           \arrow["f", from=1-2, to=2-2]
           \arrow["i"', hook, from=2-1, to=2-2]
           	\arrow["\lrcorner"{anchor=center, pos=0.125}, draw=none, from=1-1, to=2-2]
       \end{tikzcd}
    \end{equation}
    where $f$ is the blowup of $X$ at $p$ and $E$ is the exceptional divisor. 
    By definition of blowing up, the exceptional divisor $E$ is an effective Cartier divisor, and so it follows from the distinguished triangle
    \begin{displaymath}
        \mathcal{O}_{X^\prime} (-E) \to \mathcal{O}_{X^\prime} \to j_\ast \mathcal{O}_E \to \mathcal{O}_{X^\prime} (-E)[1].
    \end{displaymath}
    that $j_\ast \mathcal{O}_E$ is perfect. 
    Hence, as $f$ is quasi-perfect by assumption, $\mathbf{R}f_\ast j_\ast \mathcal{O}_E $ is perfect too. 
    To conclude, observe from the commutativity of \Cref{dia:blowup_square} that
    \begin{displaymath}
        \begin{aligned}
            0 &\not= \mathbf{R} f_\ast j_\ast \mathcal{O}_E 
            \\&\cong i_\ast \mathbf{R}g_\ast \mathcal{O}_E 
            \\&\cong i_\ast (\text{sum of shifts of } \mathcal{O}_{\operatorname{Spec}(\kappa(p))})
            \\&\cong \text{sum of shifts of } i_\ast \mathcal{O}_{\operatorname{Spec}(\kappa(p))}
        \end{aligned}
    \end{displaymath}
    Hence $i_\ast\mathcal{O}_{\operatorname{Spec}(\kappa(p))}$ is a summand of $\mathbf{R} f_\ast j_\ast \mathcal{O}_E \in \operatorname{Perf}(X)$ and so is also perfect. Thus, $p$ is regular by \Cref{lem:residue_field_perfect_gives_regular}, and we are done.
\end{proof}

\begin{proposition}
    \label{prop:regularity_via_blowups}
    A Noetherian algebraic space $X$ is regular if and only if the blowup of $X$ along any closed point is a quasi-perfect morphism.
\end{proposition}

\begin{proof}
    If $X$ is regular, then for any proper $f$ we have
    \begin{displaymath}
        \mathbf{R}f_\ast  \operatorname{Perf}(Y)\subseteq \mathbf{R}f_\ast D^b_{\operatorname{coh}}(Y)\subseteq D^b_{\operatorname{coh}}(X) = \operatorname{Perf}(X).
    \end{displaymath}
    Hence, any blowup is quasi-perfect as these are proper.
    The converse follows from \Cref{lem:regularity_via_blowups} as it suffices to check regularity at closed points (see e.g.\ \cite[Lemma 3.2]{DeDeyn/Lank/ManaliRahul/Peng:2025}).  
\end{proof}

Recall that a morphism of integral algebraic spaces $f\colon Y \to X$ is a \textbf{modification} (resp.\ \textbf{alteration}) if it is proper and birational, i.e.\ generically an isomorphism (resp.\ generically finite).
See e.g.\ \cite[\href{https://stacks.math.columbia.edu/tag/0ACU}{Tag 0ACU}]{StacksProject} and \cite[\href{https://stacks.math.columbia.edu/tag/0AD7}{Tag 0AD7}]{StacksProject} for some characterizations.

\begin{corollary}
    \label{cor:regularity_via_alteration}
    An integral Noetherian algebraic space $X$ is regular if and only if every alteration (resp.\ modification) of $X$ is quasi-perfect.
\end{corollary}

\begin{proof}
    This follows immediately from \Cref{lem:regularity_via_blowups}. Indeed, the blowups appearing in the proof have integral target and so are modifications by \Cref{lem:blowup_integral_spaces} below.
    Moreover, as modifications are alterations the case of alterations also follows.
\end{proof}

\begin{lemma}
    \label{lem:blowup_integral_spaces}
    The blowup $f\colon X^\prime \to X$ of an integral algebraic space along a nonzero quasi-coherent ideal sheaf is integral. 
    In particular, when the ideal is of finite type, it is a modification.
\end{lemma}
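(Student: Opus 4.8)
The plan is to verify the three conditions defining integrality for $X' := \operatorname{Bl}_{\mathcal I}(X)$ — reducedness, decency, and irreducibility — and then read off the modification claim. Write $f\colon X' \to X$ for the blowup, $E := V(\mathcal I \mathcal O_{X'})$ for the exceptional divisor, and $W := X \setminus V(\mathcal I)$ for the open complement of the closed subspace cut out by $\mathcal I$. The backbone of the whole argument is that $f$ restricts to an isomorphism $f^{-1}(W) \xrightarrow{\sim} W$ over a \emph{dense} open: on $W$ the ideal $\mathcal I$ is the unit ideal, and the blowup of the unit ideal is the identity. To see that $W$ is dense, recall $X$ is irreducible with generic point $\eta$; since $\mathcal I \neq 0$ and $X$ is reduced, pulling back along a faithfully flat étale cover $p\colon U \to X$ by a scheme gives $\mathcal I\mathcal O_U \neq 0$, whence $V(\mathcal I\mathcal O_U) \neq U$ and therefore $V(\mathcal I)\neq X$. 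Irreducibility of $X$ then forces $\eta \in W$, so $W$ is a nonempty, hence dense, open containing $\eta$.

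Reducedness and decency are the routine parts. For reducedness I would argue étale-locally: by flat base change of blowups \cite{StacksProject} one has $U \times_X X' = \operatorname{Bl}_{\mathcal I\mathcal O_U}(U)$, whose structure sheaf is $\operatorname{Proj}$ of the Rees algebra $\bigoplus_{n\geq 0}(\mathcal I\mathcal O_U)^n$. As $U$ is reduced, this Rees algebra is a graded subsheaf of $\mathcal O_U[t]$ and hence reduced, so its $\operatorname{Proj}$ is reduced; reducedness then descends along the étale cover $U\times_X X' \to X'$. For decency, $f$ is representable (relative $\operatorname{Proj}$ of a graded algebra generated in degree one) and separated, so $X'$ is decent because $X$ is — here I would invoke the Stacks Project statement \cite{StacksProject} that a representable morphism with decent target has decent source.

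The genuinely global input, and the step I expect to be the main obstacle, is irreducibility of $X'$: unlike the previous two properties it is not étale-local, and indeed the cover $U$ is typically reducible, so $U\times_X X'$ can be reducible even when $X'$ is not. Instead I would use that $E$ is an \emph{effective Cartier divisor} on $X'$, a defining property of blowups \cite{StacksProject}. Since $X'$ is reduced, $E$ contains no generic point of $X'$: pulling back to the reduced scheme $U\times_X X'$, the divisor is locally cut out by a nonzerodivisor, which is nonzero on every irreducible component and hence misses all maximal points, and these are exactly the preimages of the generic points of $X'$ (using that $|X'|$ is sober, as $X'$ is decent, and that étale morphisms carry maximal points to maximal points). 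Consequently every generic point of $X'$ lies in $X'\setminus E = f^{-1}(W)$, the last equality because $E$ is set-theoretically $f^{-1}(V(\mathcal I))$. But $f^{-1}(W) \cong W$ is irreducible and so has a unique generic point; since all generic points of $X'$ are contained in it, $X'$ has a single generic point and is therefore irreducible. Together with the previous paragraph this shows $X'$ is integral.

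Finally, for the modification claim, assume $\mathcal I$ is of finite type. Then the Rees algebra $\bigoplus_{n\geq 0}\mathcal I^n$ is of finite type, so $f$ is projective and in particular proper. Combined with the isomorphism over the dense open $W \ni \eta$ established above, $f$ is a proper, generically an isomorphism, morphism between integral algebraic spaces, i.e.\ a modification.
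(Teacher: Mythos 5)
Your proposal is correct, and its skeleton (reducedness étale-locally, decency via representability, irreducibility as the one genuinely global step, properness from the finite-type hypothesis) matches the paper's proof exactly; the difference lies in how the irreducibility step is executed. The paper observes that $f^{-1}(Z)$ being an effective Cartier divisor makes the open subspace $X'\setminus f^{-1}(Z)$ \emph{scheme-theoretically dense} in $X'$ (citing \cite[\href{https://stacks.math.columbia.edu/tag/083S}{Tag 083S}]{StacksProject}), hence topologically dense, and then concludes since the closure of the irreducible subset $|f^{-1}(X\setminus Z)|\cong|X\setminus Z|$ is all of $|X'|$. You instead re-derive this density by hand through generic points: a nonzerodivisor in a reduced ring lies in no minimal prime, so $E$ misses all maximal points of the reduced cover, and then you transport this to $|X'|$ and argue that a unique maximal point forces irreducibility. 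This works, but note it quietly uses two extra topological inputs that the citation sidesteps: that $|X'|$ is sober (which does hold for decent spaces, and soberness alone suffices to produce a maximal generization of every point, via the chain-of-closures Zorn argument), and that maximal points correspond correctly under the étale cover (generizations lift along flat morphisms to decent spaces). So your route trades one Stacks Project reference for two others plus some bookkeeping --- a fair trade, and arguably more transparent about \emph{why} the divisor hypothesis matters. Two small points in your favor: you verify explicitly that $V(\mathcal{I})\neq X$ (using $\mathcal{I}\neq 0$ and reducedness), which the paper leaves implicit when it asserts $|X\setminus Z|$ is irreducible, and you spell out birationality in the modification claim, where the paper only records properness.
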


\begin{proof}
    The blowup is reduced, see e.g.\ \cite[\href{https://stacks.math.columbia.edu/tag/085W}{Tag 085W}]{StacksProject}, as the latter is an \'{e}tale local property.
    Moreover, since blowups are representable, 
    it follows that $X^\prime$ is decent (see e.g.\ \cite[\href{https://stacks.math.columbia.edu/tag/0ABT}{Tag 0ABT}]{StacksProject}). 
    
    Thus, it remains to show that $|X^\prime|$ is irreducible, which is more subtle as this notion is not \'{e}tale local. 
    Let $Z$ be the center of the blow-up. We have that $f^{-1}(Z)$ is an effective Cartier divisor on $X^\prime$ and that $f$ induces an isomorphism $f^{-1}(X\setminus Z)= X\setminus Z$; see e.g.\ \cite[\href{https://stacks.math.columbia.edu/tag/085T}{Tag 085T}]{StacksProject}. 
    When $|X|$ is irreducible so is the open subset $|X\setminus Z|$ and so $|f^{-1}(X\setminus Z)| = |X^\prime \setminus f^{-1}(Z)|$ is also.
    However, $f^{-1}(Z)$ being effective Cartier implies that the open subspace $X^\prime \setminus f^{-1}(Z)$ is scheme-theoretically dense in $X^\prime$, see \cite[\href{https://stacks.math.columbia.edu/tag/083S}{Tag 083S}]{StacksProject}. 
    In particular, the closure of $|X^\prime \setminus f^{-1}(Z)|$ (in $|X^\prime|$) is $|X^\prime|$ and so the latter is also irreducible. 

    To see that $f$ is a modification when the ideal is of finite type, note that in this case $f$ is proper by \cite[\href{https://stacks.math.columbia.edu/tag/085Z}{Tag 085Z}]{StacksProject}.
\end{proof}

To finish, using \Cref{lem:faithfully_flat_detect_perfect,lem:regularity_via_blowups}, we give an alternative proof of the fact that regularity can be checked fpqc locally. 

\begin{corollary}
    \label{cor:fpqc_and_regularity}
    Let $X$ be a Noetherian algebraic space. If there is an fpqc cover $\{f_i\colon U_i \to X\}_{i\in I}$ where each $U_i$ is regular, then $X$ is regular.
\end{corollary}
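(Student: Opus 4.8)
The plan is to reduce, via \Cref{thm:regularity_via_blowups}, to showing that the blowup of $X$ at an arbitrary closed point is quasi-perfect, and then to extract this quasi-perfectness from the regularity of the cover through \Cref{lem:faithfully_flat_detect_perfect}(2). So I would fix a closed point $p \in |X|$ and let $f\colon X' = \operatorname{Bl}_p X \to X$ be the associated blowup, which is proper. Since $p$ is arbitrary, \Cref{thm:regularity_via_blowups} tells us it suffices to prove that every such $f$ is quasi-perfect.

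First I would massage the given fpqc cover into a more convenient one. Using that $X$ is Noetherian, hence quasi-compact, I can cover each (regular) $U_i$ by affine schemes \'etale over it and extract a finite jointly surjective subfamily; these pieces remain flat over $X$ (being \'etale over something flat over $X$) and remain regular. Setting $U$ to be their disjoint union produces an affine scheme $U$ together with a flat, surjective, quasi-compact morphism $s\colon U \to X$, i.e.\ an fpqc cover. The crucial observation is that $U$ is automatically Noetherian: a regular local ring is Noetherian, so $U$ is locally Noetherian, and being affine it is quasi-compact, whence Noetherian.

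Next I would base change $f$ along $s$ to obtain $f'\colon X'\times_X U \to U$ as in \Cref{dia:base_change_square}. Properness is stable under base change, so $f'$ is proper; moreover $X'\times_X U$ is of finite type over the Noetherian space $U$ and is therefore Noetherian, while $U$ itself is Noetherian and regular. The easy direction of \Cref{thm:regularity_via_blowups} then applies verbatim: since $U$ is regular we have $D^b_{\operatorname{coh}}(U) = \operatorname{Perf}(U)$, and proper pushforward along $f'$ preserves bounded coherent complexes, so
\begin{displaymath}
    \mathbf{R}f'_\ast \operatorname{Perf}(X'\times_X U) \subseteq \mathbf{R}f'_\ast D^b_{\operatorname{coh}}(X'\times_X U) \subseteq D^b_{\operatorname{coh}}(U) = \operatorname{Perf}(U),
\end{displaymath}
that is, $f'$ is quasi-perfect. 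Finally, since $s$ is flat and surjective, \Cref{lem:faithfully_flat_detect_perfect}(2) upgrades quasi-perfectness of $f'$ to that of $f$, which completes the argument.

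The only genuinely delicate point is the reduction in the second paragraph: one must replace the given cover by a single regular \emph{Noetherian} (affine) source without losing flat surjectivity, so that the regular-target argument for $f'$ becomes available. This hinges on the fact that regularity forces the cover to be locally Noetherian, which combined with the quasi-compactness arranged by the finite affine refinement (using that $X$ is Noetherian) yields an honest Noetherian fpqc cover. Everything else—the preservation of properness and of coherence under $f'$, and the descent of quasi-perfectness along $s$—is routine given the results already established, with the separatedness of $X$ entering only to guarantee that the relevant morphisms are quasi-compact and that the spaces involved are quasi-compact and quasi-separated.
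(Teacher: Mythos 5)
Your proof is correct and follows essentially the same route as the paper: refine to a finite affine (hence Noetherian) fpqc cover, base change the blowup at a closed point, run the easy direction of \Cref{thm:regularity_via_blowups} over the regular cover, and descend quasi-perfectness via \Cref{lem:faithfully_flat_detect_perfect}(2). The only cosmetic differences are that you invoke stability of properness under base change where the paper invokes compatibility of blowups with flat base change (either suffices, since all one needs is a proper morphism with regular Noetherian target), and you spell out the Noetherianity of the refined cover, which the paper leaves implicit.
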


\begin{proof}
    First, as $X$ is quasi-compact, we can reduce to an fpqc cover with finitely many components (i.e. $|I|<\infty$). 
    Secondly, if needed, we can refine the cover so that each $U_i$ is an affine scheme (e.g.\ take \'{e}tale surjective morphisms from affine schemes). 
    Next, consider the induced morphism $f\colon \bigsqcup_{i\in I} U_i \to X$ which is faithfully flat by construction and moreover is quasi-affine by \Cref{lem:affine_to_qs_space_is_quasi-affine_morphism} as $U:=\bigsqcup_{i\in I} U_i$ is (isomorphic to) an affine scheme. 
    To show $X$ is regular, it suffices by \Cref{lem:regularity_via_blowups} to show that blowups along nonzero ideal sheaves are quasi-perfect.
    However, as blowups commute with flat base change, see e.g.\ \cite[\href{https://stacks.math.columbia.edu/tag/0805}{Tag 0805}]{StacksProject}, we have that the base change to $U$ is again such a blow-up. 
    Consequently, as $U$ is regular, it is quasi-perfect. 
    Thus, the original blow-up of $X$ is also quasi-perfect, by \Cref{lem:faithfully_flat_detect_perfect}, showing the claim.
\end{proof}

\begin{remark}
    In fact, when looking at fppf covers it is enough to only assume that $X$ is quasi-compact and decent in the previous by making use of some descent. 
    Indeed, it follows from the existence of an fppf cover as in the statement that $X$ is locally Noetherian (as this is fppf local and regular algebraic spaces are always locally Noetherian) and hence Noetherian.
\end{remark}

\end{appendix}

\bibliographystyle{alpha}
\bibliography{mainbib}

@misc{StacksProject,
    shorthand    = {Stacks},
    author       = {The {Stacks Project Authors}},
    title        = {\textit{Stacks Project}},
    howpublished = {\url{https://stacks.math.columbia.edu}},
    year         = {2026},
}

@misc{Verdier:1969,
    author = {Verdier, J.-L.},
    title = {Base change for twisted inverse image of coherent sheaves},
    year = {1969},
    language = {English},
    howpublished = {Algebr. {Geom}., {Bombay} {Colloq}. 1968, 393-408 (1969).},
    keywords = {14-XX},
    zbMATH = {3320926},
    Zbl = {0202.19902}
}

@misc{DeDeyn/Lank/ManaliRahul/Peng:2025,
    title={Regularity and bounded t-structures for algebraic stacks}, 
    author={Timothy De Deyn and Pat Lank and Kabeer {Manali Rahul} and Fei Peng},
    year={2025},
    url={https://arxiv.org/abs/2504.02813},
    eprint={2504.02813},
    archivePrefix={arXiv},
    primaryClass={math.AG},
    howpublished    = {\href{https://arxiv.org/abs/2504.02813}{arXiv:2504.02813}},
    publisher     = {arXiv},
}

@article{Neeman:2021,
    author = {Neeman, Amnon},
    title = {Strong generators in {{\(\mathbf{D}^{\mathrm{perf}}(X)\)}} and {{\(\mathbf{D}^b_{\mathrm{coh}}(X)\)}}},
    fjournal = {Annals of Mathematics. Second Series},
    journal = {Ann. Math. (2)},
    issn = {0003-486X},
    volume = {193},
    number = {3},
    pages = {689--732},
    year = {2021},
    language = {English},
    doi = {10.4007/annals.2021.193.3.1},
    keywords = {18G80,18G20},
    url = {hdl.handle.net/1885/286664},
    zbMATH = {7353240},
    Zbl = {1478.18014}
}

@article{Lipman/Neeman:2007,
    author = {Lipman, Joseph and Neeman, Amnon},
    title = {Quasi-perfect scheme-maps and boundedness of the twisted inverse image functor},
    fjournal = {Illinois Journal of Mathematics},
    journal = {Ill. J. Math.},
    issn = {0019-2082},
    volume = {51},
    number = {1},
    pages = {209--236},
    year = {2007},
    language = {English},
    keywords = {14A15,13D02,13D05,14F05,18E30},
    zbMATH = {5197677},
    Zbl = {1124.14003}
}

@article{Letz:2021,
    author = {Letz, Janina C.},
    title = {Local to global principles for generation time over commutative {Noetherian} rings},
    fjournal = {Homology, Homotopy and Applications},
    journal = {Homology Homotopy Appl.},
    issn = {1532-0073},
    volume = {23},
    number = {2},
    pages = {165--182},
    year = {2021},
    language = {English},
    doi = {10.4310/HHA.2021.v23.n2.a10},
    keywords = {18G80,13D09,13B30,16E35},
    zbMATH = {7420249},
    Zbl = {1485.18021}
}

@article {Bondal/VandenBergh:2003,
    AUTHOR = {Bondal, Alexei and {V}an den {B}ergh, Michel},
    TITLE = {Generators and representability of functors in commutative and noncommutative geometry},
    JOURNAL = {Mosc. Math. J.},
    FJOURNAL = {Moscow Mathematical Journal},
    VOLUME = {3},
    YEAR = {2003},
    NUMBER = {1},
    PAGES = {1--36, 258},
    ISSN = {1609-3321,1609-4514},
    MRCLASS = {18E30 (14F05)},
    MRNUMBER = {1996800},
    MRREVIEWER = {Ioannis\ Emmanouil},
    DOI = {10.17323/1609-4514-2003-3-1-1-36},
    URL = {https://doi.org/10.17323/1609-4514-2003-3-1-1-36},
}

@article {Balmer/Dellambrogio/Sanders:2016,
    AUTHOR = {Balmer, Paul and Dell'Ambrogio, Ivo and Sanders, Beren},
    TITLE = {Grothendieck-{N}eeman duality and the {W}irthm\"uller
    isomorphism},
    JOURNAL = {Compos. Math.},
    FJOURNAL = {Compositio Mathematica},
    VOLUME = {152},
    YEAR = {2016},
    NUMBER = {8},
    PAGES = {1740--1776},
    ISSN = {0010-437X,1570-5846},
    MRCLASS = {18E30 (14F05 55U35)},
    MRNUMBER = {3542492},
    MRREVIEWER = {Martin\ Frankland},
    DOI = {10.1112/S0010437X16007375},
    URL = {https://doi.org/10.1112/S0010437X16007375},
}

@misc{Lyu/Murayama:2022,
    title={The relative minimal model program for excellent algebraic spaces and analytic spaces in equal characteristic zero}, 
    author={Shiji Lyu and Takumi Murayama},
    year={2022},
    eprint={2209.08732},
    archivePrefix={arXiv},
    primaryClass={math.AG},
    url={https://arxiv.org/abs/2209.08732}, 
    howpublished	= {\href{https://arxiv.org/abs/2209.087323}{arXiv:2209.08732}},
    publisher     = {arXiv},
}

@article{Avramov/Iyengar/Lipman:2010,
    author = {Avramov, Luchezar L. and Iyengar, Srikanth B. and Lipman, Joseph},
    title = {Reflexivity and rigidity for complexes, {I}: {Commutative} rings},
    fjournal = {Algebra \& Number Theory},
    journal = {Algebra Number Theory},
    issn = {1937-0652},
    volume = {4},
    number = {1},
    pages = {47--86},
    year = {2010},
    language = {English},
    doi = {10.2140/ant.2010.4.47},
    keywords = {13D09,13D03,13D05,13D40},
    zbMATH = {5704462},
    Zbl = {1194.13017}
}

@Article{Hall/Rydh:2017,
    Author = {Hall, Jack and Rydh, David},
    Title = {Perfect complexes on algebraic stacks},
    FJournal = {Compositio Mathematica},
    Journal = {Compos. Math.},
    ISSN = {0010-437X},
    Volume = {153},
    Number = {11},
    Pages = {2318--2367},
    Year = {2017},
    Language = {English},
    DOI = {10.1112/S0010437X17007394},
    Keywords = {14F05,13D09,14A20,18E30},
    zbMATH = {6810455},
    Zbl = {1390.14057}
}

\end{document}